\newcommand{\hide}[1]{}
\newcommand{\ABox}{
\raisebox{3pt}{\framebox[6pt]{\rule{6pt}{0pt}}}
}
\newenvironment{proof}{{\bf Proof:}}{\hfill\ABox}
\newtheorem{thm}{{\bf Theorem}}
\newtheorem{cor}{Corollary}
\newtheorem{lem}{Lemma}
\newtheorem{claim}{Claim}
\newcommand{\lemlab}[1]{\label{lemma:#1}}
\newcommand{\thmlab}[1]{\label{therom:#1}}
\newcommand{\corlab}[1]{\label{cor:#1}}
\newcommand{\tablab}[1]{\label{tabl:#1}} 
\newcommand{\figlab}[1]{\label{fig:#1}}
\newcommand{\seclab}[1]{\label{sec:#1}}
\newcommand{\lemref}[1]{\ref{lemma:#1}}
\newcommand{\thmref}[1]{\ref{therom:#1}}
\newcommand{\corref}[1]{\ref{cor:#1}}
\newcommand{\figref}[1]{\ref{fig:#1}}
\newcommand{\tabref}[1]{\ref{tabl:#1}}
\def\a{{\alpha}}
\def\q{{\theta}}
\def\int{{\operatorname{int}}}
\newcommand{\squeezelist}{\setlength{\itemsep}{0pt}}
\def\defn#1{\textit{\textbf{\boldmath #1}}}
\gdef\@fnsymbol#1{\ensuremath{\ifcase#1\or *\or \dagger\or \ddagger\or
   \mathsection\or \mathparagraph\or \|\or **\or \dagger\dagger\or
   \ddagger\ddagger\or \mathsection\mathsection\or
   \mathparagraph\mathparagraph\or \|\|\or ***\or \dagger\dagger\dagger\or
   \ddagger\ddagger\ddagger\or \mathsection\mathsection\mathsection\or
   \mathparagraph\mathparagraph\mathparagraph\or \|\|\|\else\@ctrerr\fi}}}
\title{%
Quasigeodesics on the Cube
} 
\author{%
MIT CompGeom Group\thanks{%
Artificial first author to highlight that the other authors
(in alphabetical order) worked as an equal group.
}
\and
Hugo A. Akitaya\thanks{%
U. Mass. Lowell, \texttt{hugo\_akitaya@uml.edu}}
\and
Erik D. Demaine\thanks{%
MIT, \texttt{edemaine@mit.edu}}
\and
Adam Hesterberg\thanks{%
Harvard U., \texttt{ahesterberg@seas.harvard.edu}}
\and
Thomas C. Hull\thanks{%
\texttt{thomas.hull@fandm.edu}}
\and
Anna Lubiw\thanks{%
U. Waterloo, \texttt{alubiw@uwaterloo.ca}}
\and
Jayson Lynch\thanks{%
MIT, \texttt{jaysonl@mit.edu}}
\and
Klara Mundilova\thanks{%
\texttt{klara.mundilova@gmail.com}}
\and
Chie Nara\thanks{
\texttt{cnara@jeans.ocn.ne.jp}}
\and
Joseph O'Rourke\thanks{%
Smith College, \texttt{jorourke@smith.edu}}
\and
Frederick Stock\thanks{%
U. Mass. Lowell, \texttt{fbs9594@rit.edu}} 
\and
Josef Tkadlec\thanks{%
\texttt{josef.tkadlec@iuuk.mff.cuni.cz}}
\and
Ryuhei Uehara\thanks{%
\texttt{uehara@jaist.ac.jp}}
}
\date{\today}
\begin{document}
\maketitle


\begin{abstract}
A quasigeodesic 
is a curve on the surface of a convex polyhedron that
has $\le \pi$ surface to each side at every point.
In contrast, a geodesic has exactly $\pi$ to each side and so can never pass through
a vertex, whereas quasigeodesics can.
Although it is known that every convex polyhedron
has at least three simple closed quasigeodesics, little else is known.
Only tetrahedra have been thoroughly studied.

\medskip
In this paper we explore the quasigeodesics on a cube, which have not
been previously enumerated.
We prove that the cube has exactly $15$ simple closed quasigeodesics
(beyond the three known simple closed geodesics).
For the lower bound we detail $15$ simple closed quasigeodesics.  Our main contribution is  establishing a matching upper bound.
For general convex polyhedra, there is no known upper bound.  
\end{abstract}

\section{Introduction}
\seclab{Introduction}
\subsection{Quasigeodesics}
\seclab{Quasigeodesics}
A \defn{quasigeodesic}
is a curve on the surface of a convex polyhedron that
has $\le \pi$ surface to each side at every point.
In contrast, a \defn{geodesic} has exactly $\pi$ to each side.
Because a vertex is a point with less than $2\pi$ surface, 
a geodesic can never pass through a vertex.
Quasigeodesics can pass through vertices.

Ever since since Poincar\'e's investigations more
than a century ago,
closed geodesics have played an important role in the
topology of Riemannian manifolds~\cite[p.~433]{b-pvrg-03}.
It is a famous $1929$ theorem of Lyusternik-Schnirelmann that every smooth genus-$0$ surface
has at least three simple (non-self-intersecting)
closed geodesics~\cite{ls-optgf-29}.
Pogorelov proved in $1949$ a natural analog:
Every convex surface has at
least three simple closed quasigeodesics~\cite{p-qglcs-49}.
Pogorelov's existence proof does not suggest a way to identify the three 
quasigeodesics, and it is only recently that finite algorithms have been 
proposed~\cite{demaine2020quasi}~\cite{ChartierdeMesmay}.

Aside from these algorithms,
simple closed quasigeodesics have only been systematically studied on tetrahedra.
Two results in~\cite{QonT} are: 
(1)~On any non-isosceles tetrahedron, there is at least one $1$-vertex, 
one $2$-vertex,
and one $3$-vertex 
simple closed quasigeodesic.
(2)~There is an open set in the space of all tetrahedra, each element of which has at least $34$ simple closed quasigeodesics.
In contrast to (1),
it is known
from~\cite{davis2017geodesics} that the cube does not have a $1$-vertex simple
closed quasigeodesic.

Simple closed quasigeodesics play central roles in~\cite{QuasiTwist} and~\cite{Reshaping}, and are of interest in their own right.
But beyond their existence, much remains unknown. 
There is no known upper bound on the number of simple closed quasigeodesics
on a given polyhedron, and
there is an $n$-vertex polyhedron with $2^{\Omega(n)}$ distinct simple closed quasigeodesics~\cite[Sec.~24.4]{do-gfalop-07}. 
In contrast, it is known that isosceles 
tetrahedra\footnote{Also called disphenoids,
tetramonohedra,
isotetrahedra, and several other names.
All faces are congruent acute triangles.} 
have arbitrarily long 
``spiraling'' simple closed geodesics~\cite{protasov2007closed}~\cite{akopyan2018long}.

In this paper we make a complete inventory of simple closed quasigeodesics on a cube.
It was known that there are precisely three simple closed geodesics on the cube.
We identify a further $15$ simple closed quasigeodesics (up to symmetries), and prove that this list is complete. 
We consider this proof to be our most significant contribution.

\subsection{Three simple closed geodesics}
\seclab{ThreeQgeos}
To describe geodesics and quasigeodesics explicitly, we adopt the notation
for faces and vertices displayed in
Fig.~\figref{CubeLabeling}.
Note that we label vertices in figures by their index $i$,
but refer to them in the text as $v_i$.
\begin{figure}[htbp]
\centering
\includegraphics[width=0.45\textwidth]{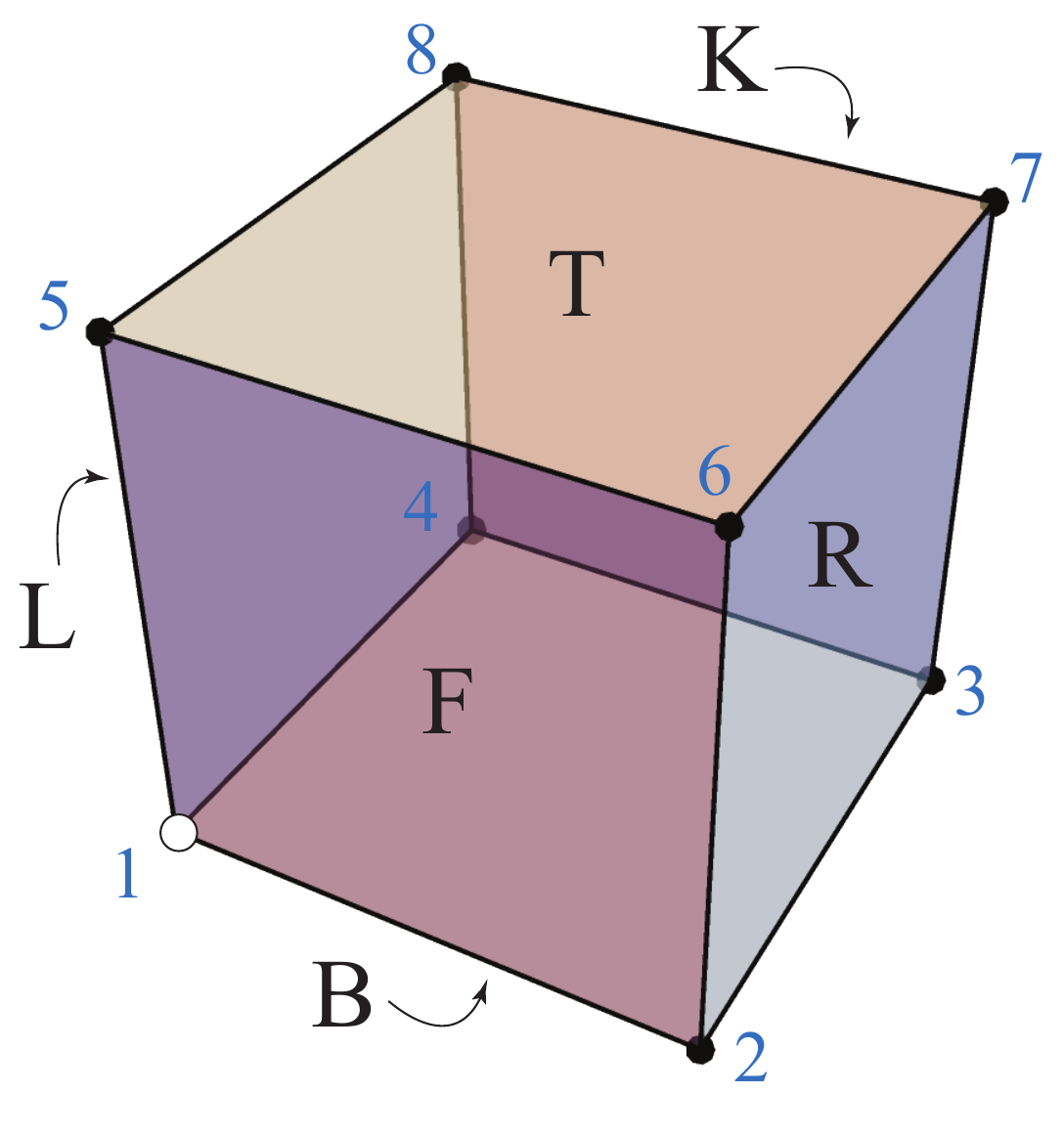}
\caption{F,R,T,K,L,B $=$ Front, Right, Top, bacK, Left, Bottom.
B vertices indexed $1,2,3,4$;
T vertices indexed $5,6,7,8$.
$v_1$ is marked white.
}
\figlab{CubeLabeling}
\end{figure}

It has long been known that there 
are precisely three simple closed geodesics on the cube~\cite{fuchs2007closed},
displayed in Fig.~\figref{Geos3_2D3D}.%
\footnote{Note these three are not the three
from Pogorelov's theorem.}
\begin{figure}[htbp]
\centering
\includegraphics[width=0.75\textwidth]{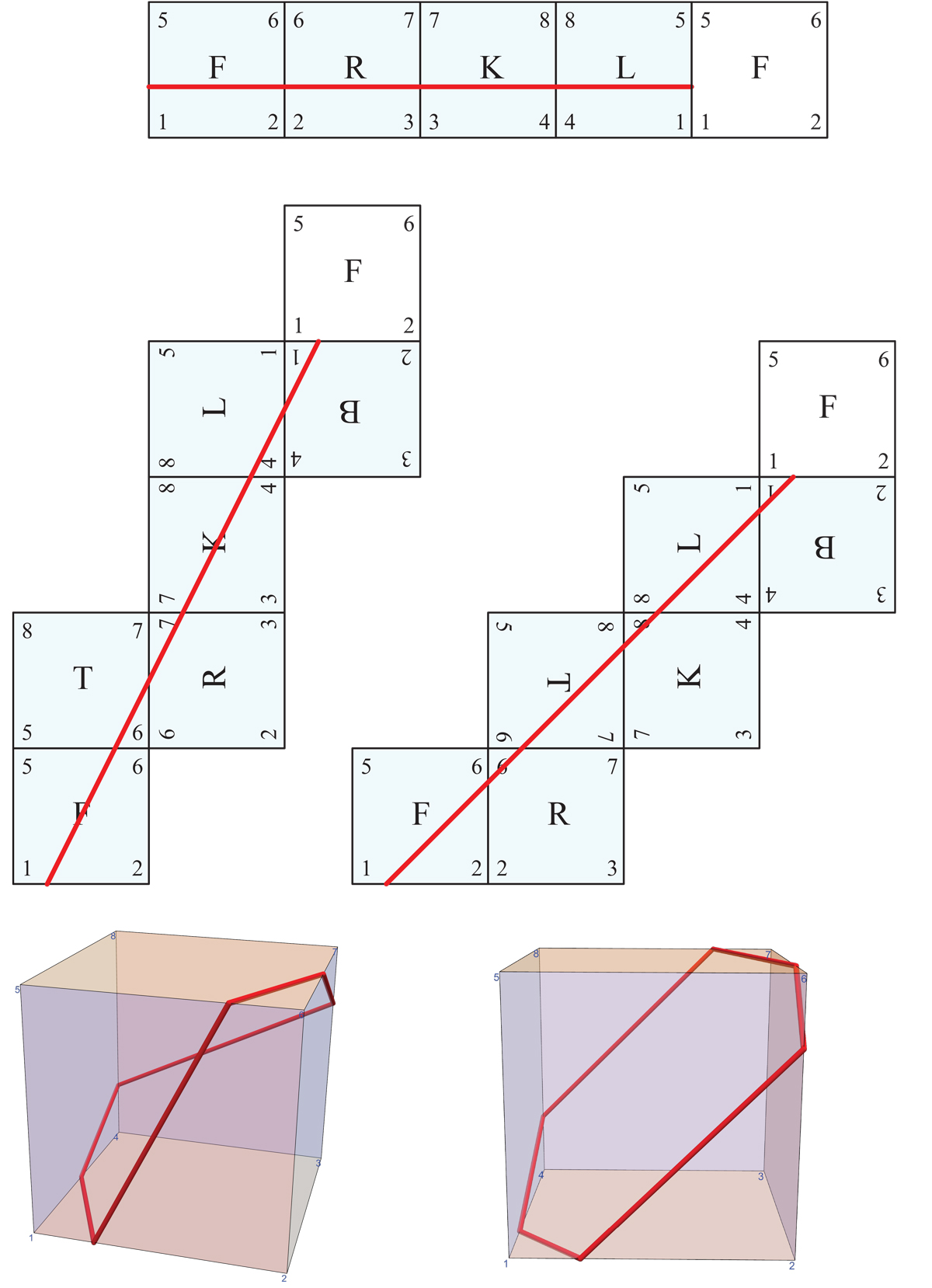}
\caption{The three simple closed geodesics on a cube.
The first is an equatorial band.
The other two are as depicted.
}
\figlab{Geos3_2D3D}
\end{figure}
Note that each of the three geodesics can 
slide
within a range, maintaining
parallelism. This is because each geodesic lies on a cylinder,
with $2 \pi$ curvature (four vertices, each with $\pi/2$ curvature) to each side. 

\section{Outline of Argument}
\seclab{Outline}
We mentioned that simple closed geodesics can spiral around isosceles tetrahedra.
A simple closed quasigeodesic also may spiral around other convex polyhedra, as shown in Fig.~\figref{LongBoxTurns} below.
A central aspect of our proof is to show that quasigeodesics  cannot spiral on a cube.


Define a \defn{geodesic segment} as a non-self-intersecting
vertex-to-vertex geodesic.\footnote{
In some literature, a geodesic segment is a shortest path between its endpoints.
In this paper, our geodesic segments may or may not be shortest.}
A simple closed quasigeodesic is composed of a sequence of 
geodesic segments,
satisfying the $\le \pi$ condition to both sides at each vertex.

An instructive example was identified in~\cite{demaine2020quasi}:
a long box with a spiraling simple closed quasigeodesic.
See Fig.~\figref{LongBoxTurns}.
Each of the four marked vertices has $\pi$ angle to one side and $\pi/2$ to the other side.
Since there is freedom to 
partition the $3\pi/2$ surface angle differently
(while maintaining $\le \pi$ to each side),
the number of spiraling simple closed quasigeodesics of a long box grows with the length of the long side of the box. 
A crucial property of spiraling is that some geodesic segment re-enters its initial face.  For example, the blue geodesic segment from $v_1$ to $v_2$ in the figure
starts on the long front-side face and later re-enters that face.
We will prove that this cannot happen on a cube: a geodesic segment cannot return to its initial face, and in fact, cannot cross any face more than once.

\begin{figure}[htbp]
\centering
\includegraphics[width=0.75\textwidth] {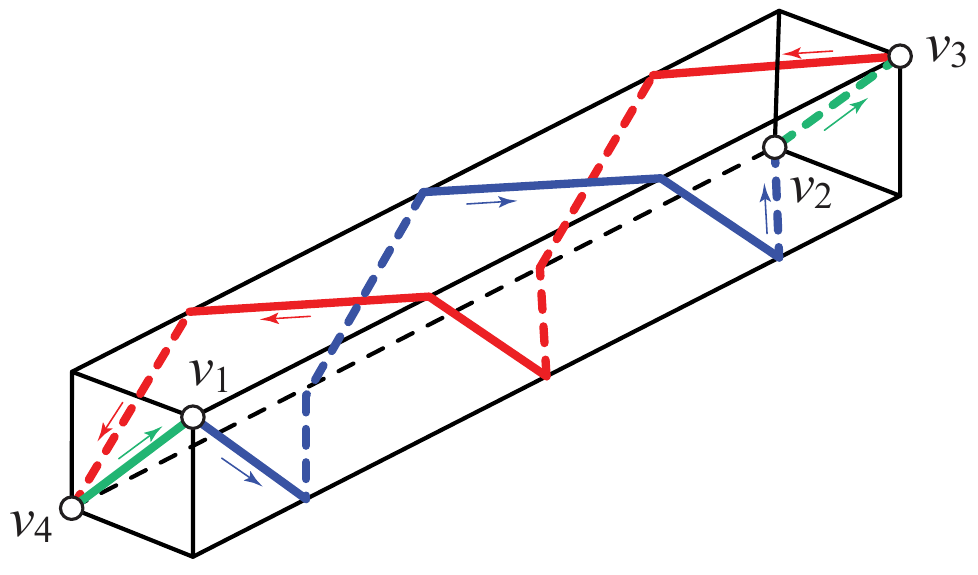}
\caption{$(v_1,v_2,v_3,v_3,v_4)$ is a simple closed quasigeodesic.
Based on Fig.~2 in~\cite{demaine2020quasi}.}
\figlab{LongBoxTurns}
\end{figure}


%

\section{Fifteen Simple Closed Quasigeodesics}
\seclab{15Quasigeos}
Here is our main result:
\begin{thm}
\thmlab{main15}
There are exactly $15$ simple closed quasigeodesics on the cube
(beyond the three simple closed geodesics noted above). 
These are displayed in Fig.~\figref{InventoryPepa15}
and described in Table~1. 
\end{thm}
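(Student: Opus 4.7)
The proof has two directions. For the lower bound we simply exhibit the 15 curves and verify each one is a simple closed quasigeodesic: at every vertex visited, one checks that the surface angle on each side is at most $\pi$, and one checks that the curve is non-self-intersecting. Both conditions can be read off directly from the planar unfoldings displayed in Fig.~\figref{InventoryPepa15} and the vertex-angle splits recorded in Table~1, so the lower bound is a finite case-by-case verification. The real content of the theorem is the matching upper bound, and that is where I focus my plan.

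My plan for the upper bound begins by recalling from Section~\secref{Outline} that every simple closed quasigeodesic decomposes into a cyclic sequence of geodesic segments, with every turn occurring at a cube vertex. A quasigeodesic that visits no vertex at all is an ordinary geodesic, and those three curves are excluded from our count by hypothesis, so we may assume the quasigeodesic visits at least one vertex. The crucial first step is what Section~\secref{Outline} calls the \emph{no-spiraling} statement, in its sharpened form: on the cube, no geodesic segment crosses any single face more than once. I would prove this by unfolding the strip of faces traversed by a hypothetical geodesic segment into the plane and arguing that, because every cube face is a unit square and every vertex has curvature exactly $\pi/2$, an unfolded strip that would cause a face to be revisited must bend in a way that no straight vertex-to-vertex line of sight can follow without passing through an intermediate vertex (thereby ending the segment earlier). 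This is the cube-specific fact that prevents the long-box behavior of Fig.~\figref{LongBoxTurns} from occurring.

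Granting the no-repeat-face lemma, each geodesic segment uses at most the six cube faces, so the set of candidate segments, up to the symmetry group of the cube, is finite and can be enumerated by a short combinatorial search over face-sequences with endpoints in the 8 vertices. At each vertex $v$ visited by a quasigeodesic, the total surface angle $3\pi/2$ must split as $\alpha + (3\pi/2 - \alpha)$ with both summands at most $\pi$, forcing $\alpha \in [\pi/2, \pi]$; this gives a strong local compatibility constraint on the pair of segments meeting at $v$. I would then assemble all cyclic concatenations of candidate segments satisfying the local angle condition at every vertex, prune by global non-self-intersection, and quotient by the cube's symmetry group to avoid overcounting. The expected outcome is exactly the 15 classes listed in Table~1.

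The principal obstacle is the sharpened no-spiraling lemma. Without it the candidate pool is a priori infinite and any enumeration is incomplete; with it in hand the rest is a bounded, essentially mechanical finite search organized by the number of vertices visited and by the combinatorial pattern of face crossings. For this reason I would concentrate the geometric work on the unfolding argument for the no-repeat-face lemma, and treat the subsequent enumeration and angle/intersection bookkeeping as a structured but routine case analysis.
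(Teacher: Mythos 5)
There is a genuine gap, and it sits exactly where you chose to concentrate your effort. Your plan rests on a ``no-repeat-face lemma'' stated for an \emph{individual} vertex-to-vertex geodesic segment, to be proved by unfolding the strip of faces it crosses. But that statement is false for standalone segments on the cube. For example, the segment leaving $v_1$ with slope $1/5$ unrolls across faces F, R, K, L and re-enters F before terminating at $v_6$: its two traces in F (one near the bottom edge, one near the top) are disjoint, so it is a perfectly good simple vertex-to-vertex geodesic segment that crosses a face twice. This is precisely why the paper's Corollary~\corref{NotTwice} is restricted to segments that are \emph{components of a simple closed quasigeodesic}, and why in Fig.~\figref{SevenCases} the slope ranges of Cases 1, 4, 5, and 7 cannot be eliminated by any self-intersection or line-of-sight argument about the segment alone. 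No unfolding argument confined to a single segment can close this hole; the curvature bookkeeping you invoke ($\pi/2$ per vertex, unit faces) does not force an intermediate vertex onto the sightline in these cases.

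The paper's route is therefore different in an essential way: the primary result is the five-slopes restriction (Lemma~\lemref{5Slopes}), and the no-repeat-face property is a \emph{corollary} of it, not a stepping stone toward it. The slopes that survive the self-intersection test (Claim~\ref{claim:easy-cases} handles only Cases 2, 3, 6) are killed in Claim~\ref{claim:backward-cases} by exploiting the quasigeodesic condition at the segment's starting vertex: the $\le\pi$ angle constraint confines the \emph{previous} segment $g'$ to a cone of angle $\theta+\pi/2$, and a case analysis shows $g'$ (or the segment before it) must cross $g$. You would need to add an argument of this interactive, backward-tracing kind --- reasoning about adjacent segments of the quasigeodesic, not the segment in isolation --- to make the candidate pool finite. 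Once finiteness is secured, your enumeration plan (local angle splits $\alpha\in[\pi/2,\pi]$ at each vertex, global simplicity, quotient by symmetry) matches the paper's DFS and is routine, as you say; the lower-bound verification is likewise fine.
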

\noindent
As our sole focus in the remainder is on
``simple closed quasigeodesics,'' we often simplify that term 
to \defn{quasigeos}.

\noindent\begin{minipage}{\linewidth}
\centering
\includegraphics[width=1.0\textwidth]{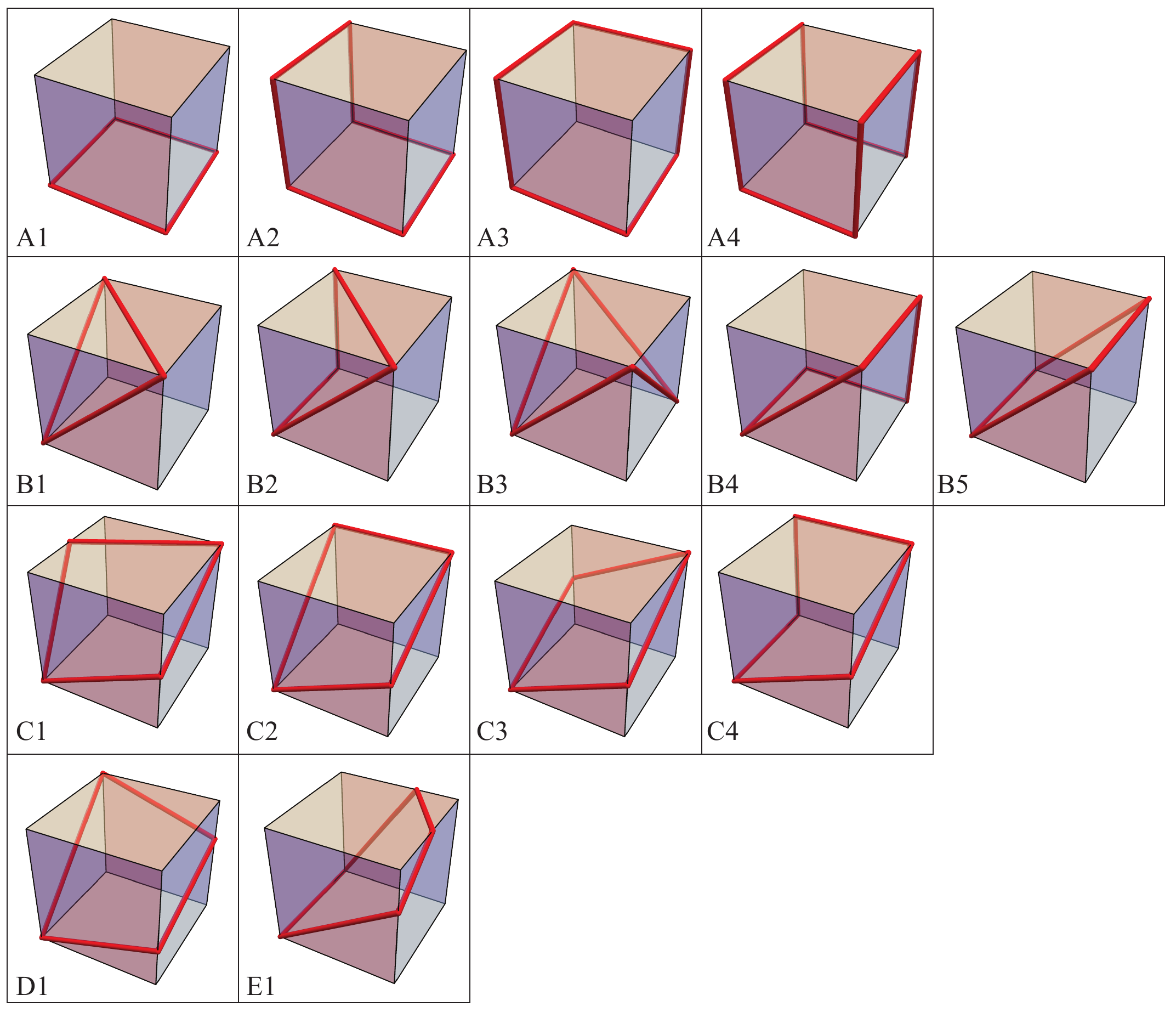}
\captionof{figure}{The $15$ simple closed quasigeodesics.}
\figlab{InventoryPepa15}
%
\begin{center}
\begin{tabular}{ | c l  |}
\hline
$A_i$: & Four quasigeos using only ${0/1}$ segments (each of length $1$).
\\
 $B_i$: & Five quasigeos using at least one ${1/1}$ segment (length $\sqrt{2}$), and none longer.
\\
 $C_i$: & Four quasigeos using at least one ${1/2}$ segment (length $\sqrt{5}$), and none longer.
\\
$D_1$: & One quasigeo using a single ${1/3}$ (length $\sqrt{10}$), and none longer.
\\
$E_1$: & One quasigeo using a single ${2/3}$ (length $\sqrt{13}$).
\\
\hline
\end{tabular}
\tablab{Description15} 
\captionof{table}{Description of the five categories of quasigeos.}
\end{center}
\end{minipage}

\bigskip

The quasigeos are listed in 
order of the length of the geodesic segments 
comprising them, as described in Table~\tabref{Description15}.
We identify a geodesic segment by its slope
$y/x$, i.e., vertically up $y$ units and rightward horizontally $x$ units within the 
natural coordinate system of its starting face.

\section{Five Slopes}

Our approach is to analyze a geodesic segment  based on the
angle $\a$ 
it makes in its starting face.
Consider a geodesic segment that does not follow an edge of the cube.  Then it enters the interior of a face and makes an angle in the range $(0, \pi/4]$ with one edge of the face. We express this as a slope in the range $(0,1]$.
We first rule out some slopes in this range because the geodesic
segment revisits the first face and intersects itself there. We rule out further
slopes by finding intersections between two 
geodesic segments. This
reduces the possible slopes to a finite set, which allows a combinatorial enumeration of all simple closed quasigeodesics.

\begin{figure}[htbp]
\centering
\includegraphics[width=0.5\textwidth]{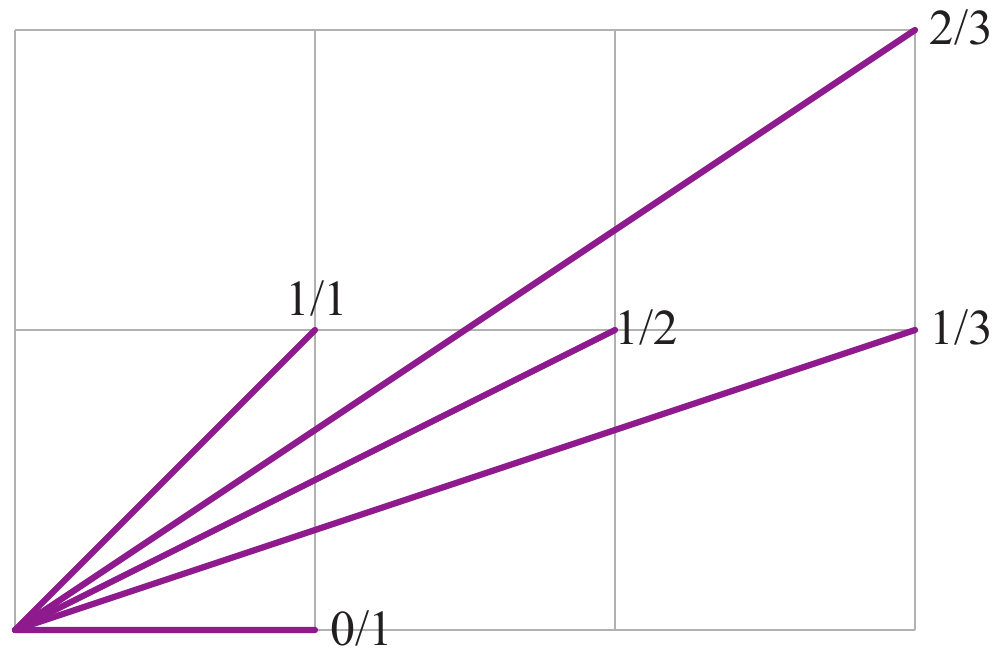}
\caption{The five possible distinct slopes.}
\figlab{FiveSlopes}
\end{figure}

\begin{lem}
\lemlab{5Slopes}
A geodesic segment that is a component of a simple
closed quasigeodesic on the cube can only have one of
the five slopes shown in Fig.~\figref{FiveSlopes}:
$0/1,1/3,1/2,2/3,1/1$. 

\end{lem}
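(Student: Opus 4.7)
The plan is to unfold the cube along the geodesic segment so it becomes a straight line in the plane, with both endpoints at lattice points (the two cube vertices). Writing the slope as $p/q$ with $0 < p \le q$ and $\gcd(p, q) = 1$ (so $\alpha \in (0, \pi/4]$), the segment runs from $(0, 0)$ to $(q, p)$. Irrational slopes are excluded immediately, since the segment would never hit another lattice point, so only rationals $p/q \in (0, 1]$ need be checked.

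The first step uses the ``equatorial strip'' of side faces $F, R, K, L$ unfolded as a $4 \times 1$ horizontal band. If $p/q < 1/4$ the segment re-enters $F$ at $x = 4$ (where $y = 4p/q < 1$) before reaching any new vertex, contradicting the no-return-to-initial-face property promised in Section~\secref{Outline}. This rules out all slopes $1/q$ with $q \ge 5$. Continuing the unfolding across horizontal edges into $T$ (or $B$) and beyond, a similar case-by-case check rules out further rational slopes: for instance, unfolding shows that slope $3/5$ revisits $F$ after six face-crossings, slope $2/7$ revisits $F$ after five, and slope $3/8$ revisits $R$ after seven. By the stronger ``no face is visited twice'' version of the no-return property, all such slopes are excluded.

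A few borderline slopes, chiefly $1/4$, $2/5$, and $3/4$, unfold to segments that visit each face they encounter exactly once and thus survive the face-revisit argument. For these I would use the second technique mentioned above the lemma: find an intersection between two geodesic segments of the same quasigeodesic. Concretely, I fix a candidate quasigeodesic containing one of these segments, enumerate the finitely many continuations at its endpoint vertex permitted by the $\le \pi$ angular condition, and show that any such continuation, when traced further in the unfolding, must cross the original segment before the quasigeodesic closes.

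The main obstacle is this second technique. Unlike the clean equatorial-strip argument, the exclusion of slopes like $1/4$ and $2/5$ requires a global combinatorial analysis of how a full simple closed quasigeodesic could contain such a segment. Each case involves tracking possible continuation slopes at the endpoint vertex (themselves drawn from a short list), checking angular compatibility at every intermediate vertex, and locating a forced crossing in the unfolded picture. The analysis is finite but intricate, and getting it right for every borderline slope is where I expect the real work to lie.
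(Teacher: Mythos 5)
Your argument leans on the ``no return to the initial face'' (and the stronger ``no face visited twice'') property as if it were already available, but in the paper that property is Corollary~\corref{NotTwice}, a \emph{consequence} of this lemma; Section~\secref{Outline} only announces that it will be proved. Invoking it here is circular. The only face-revisit that yields an immediate contradiction is one in which the unfolded segment crosses \emph{itself} (a geodesic segment is by definition non-self-intersecting), and that happens only for slopes in $(1/4,1/3)$, $(1/3,2/5)$, and $(2/3,3/4)$. For a slope $s\in(0,1/4)$ the returning copy re-enters $F$ at height $4s$ parallel to and strictly above the original, so there is no self-intersection and no contradiction; the same is true for slopes such as $3/5$. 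Consequently your first two steps only legitimately dispose of three open intervals, and the set of ``borderline'' slopes you must kill with the crossing argument is not the three isolated values $1/4$, $2/5$, $3/4$ but the entire ranges $(0,1/4]$, $[2/5,1/2)$, $(1/2,2/3)$, $[3/4,1)$ --- exactly the paper's Cases 1, 4, 5, and 7, which it handles by tracing the quasigeodesic \emph{backwards} from the start vertex and showing that the previous segment $g'$ must cross $g$ wherever it lies in its admissible cone.

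A second, related problem: you propose to ``enumerate the finitely many continuations at the endpoint vertex,'' but the adjacent segment of the quasigeodesic is not a priori restricted to the five slopes (that is what is being proved), so it may leave the vertex in a continuum of directions within a cone of angle $\theta+\pi/2$. The paper's Claim~\ref{claim:backward-cases} deals with this by partitioning that cone into three pieces (strictly inside the quarter-circle on the adjacent faces; within an extra angle $\theta$ past it; exactly along an edge) and showing that every direction forces a crossing, with the edge subcase recursing through further vertices until either a crossing or an impossibly sharp angle appears. Finally, checking rational slopes one at a time ($3/5$, $2/7$, $3/8$, \dots) never terminates; you need a uniform treatment of whole slope intervals, as in the paper's seven-case partition of $(0,1]$, to make the analysis finite.
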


\begin{cor}
\corlab{NotTwice}
A geodesic segment that is a component of a simple
closed quasigeodesic on the cube does not cross any face more than once.    
\end{cor}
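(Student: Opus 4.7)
The plan is to combine Lemma~\lemref{5Slopes} with a short unfolding argument for each of the five allowed slopes. By the lemma, every geodesic segment has slope $p/q \in \{0/1, 1/3, 1/2, 2/3, 1/1\}$. Since a geodesic cannot pass through a vertex, and since unfolding the cube along the segment straightens it into a planar line whose vertex images are exactly the integer lattice points of the unfolding, the segment must run from its starting vertex to the first integer lattice point along its direction; for $\gcd(p,q)=1$ that point is $(q,p)$, so the segment has length $\sqrt{p^2+q^2}$ and passes through $p+q-1$ face images. By cube symmetry I may fix the starting vertex as $v_1$ and the starting face as F, and then verify case by case that the face images encountered correspond to pairwise distinct cube faces.

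For slopes $0/1$ and $1/1$ there is nothing to do: the segment runs along an edge or along the diagonal of F, so it crosses at most one face (and at most once). For slopes $1/2$ and $1/3$ the unfolding passes through only $2$ or $3$ face images, and I would dispose of these by a combinatorial fact about the cube's face-adjacency (octahedron) graph: in any non-backtracking edge-walk of length at most three, the visited faces are pairwise distinct. Indeed, the entry and exit edges at the intermediate face are different, and since the cube's face graph is simple with the four edges of any face leading to four distinct neighbors, the predecessor and the successor cannot coincide.

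The main obstacle is the slope $2/3$ case, because the unfolding now passes through four face images and the combinatorial argument no longer rules out repeats---the walk $F \to R \to T \to F$ is legal in the face-adjacency graph. Here I would compute the unfolding explicitly. Starting from $v_1=(0,0)$ in F and following the line $y=\tfrac{2}{3}x$ to $(3,2)$, the crossings with the unit grid occur at $(1,\tfrac{2}{3})$, $(\tfrac{3}{2},1)$, and $(2,\tfrac{4}{3})$, so the unit squares traversed are F, then R (to the right of F), then T (above R), then K (to the right of T), ending at $v_4$. Since F, R, T, K are four distinct cube faces, this segment crosses each at most once, finishing the case analysis. Cube symmetries then extend the conclusion to every starting configuration.
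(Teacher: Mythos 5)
Your argument is correct and takes essentially the paper's route: the paper states this as an immediate corollary of Lemma~\lemref{5Slopes}, leaving exactly the slope-by-slope unfolding verification you carry out (cf.\ Fig.~\figref{FiveSlopes}). One small wording slip: your combinatorial fact about the face-adjacency (octahedron) graph holds for non-backtracking walks visiting at most three faces (length at most two edges), not ``length at most three,'' as you yourself implicitly acknowledge when the legal walk $F\to R\to T\to F$ forces the explicit lattice computation in the $2/3$ case.
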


We prove the lemma by partitioning the rest of the slope range $(0,1]$ into the following seven ranges:
\begin{itemize}
\item Case 1. $(0/1, 1/4]$
\item Case 2. $(1/4,1/3)$
\item Case 3. $(1/3,2/5)$
\item Case 4. $[2/5,1/2)$
\item Case 5. $(1/2,2/3)$
\item Case 6. $(2/3,3/4)$
\item Case 7. $[3/4,1/1)$
\end{itemize}

Fig.~\figref{CasesFill45deg} shows the seven cases, and 
Fig.~\figref{SevenCases} shows how each range progresses on the unfolded surface of the cube.
Each case has a (pink) \defn{F-cone} with
angle $\q$ at $v_1$.
From
Fig.~\figref{SevenCases} we immediately obtain:


\begin{claim} 
\label{claim:easy-cases}
No geodesic segment is possible in 
Cases 2, 3, and 6 because the segment revisits the starting face and intersects itself there. 
(We note that the crossing is at right angles, 
a known constraint~\cite{fuchs2007closed}.)
\end{claim}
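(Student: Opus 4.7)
The plan is to work directly with the unfoldings displayed in Fig.~\figref{SevenCases}. Since a geodesic lifts to a straight line in any planar development of cube faces, for each of Cases~2, 3, and 6 I would unfold the faces that the segment enters one by one along the straight line emanating from $v_1$ at a slope in the given range, and follow the line until a copy of the starting face F reappears in the development.

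For each of these three cases the figure shows that after a short sequence of face crossings the straight line re-enters a second planar copy of F, and does so in the interior of that copy strictly before reaching any cube vertex. Translating back to the cube, this means the segment crosses F at two distinct points, which contradicts the definition of a geodesic segment as a non-self-intersecting vertex-to-vertex geodesic. Hence no such geodesic segment exists. The parenthetical remark about perpendicularity then follows either from the general Fuchs--Fuchs constraint cited in the claim, or directly from the observation that the second copy of F in the development is related to the first by a rotation of $\pi/2$ about a single cube vertex (each vertex carrying curvature $\pi/2$), so the two passages of the line through F meet at a right angle on the cube.

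The main thing to verify, and the only subtle point, is that the slope ranges have been taken as open (or with the relevant boundary excluded): their endpoints $1/4,\,1/3,\,2/5,\,2/3,\,3/4$ are precisely the slopes at which the straight line hits a cube vertex en route back to F, so only in the strict interiors of the three ranges does the re-entry of F land in the interior of the face rather than at a vertex or edge, where the geodesic segment could legitimately terminate before the self-intersection occurs. Since this is apparent from Fig.~\figref{SevenCases}, the claim reduces to case-by-case inspection of the figure; there is no genuine technical obstacle beyond this bookkeeping, which makes it natural to handle Cases~2, 3, and 6 together and leave the remaining four cases (where either no re-entry of F is forced or a different obstruction is needed) to the subsequent lemmas.
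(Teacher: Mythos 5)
Your proposal takes essentially the same route as the paper, which establishes this claim purely by inspection of the unfoldings in Fig.~\figref{SevenCases}: in Cases~2, 3, and~6 the developed line re-enters a second copy of the starting face and the two passages cross there at right angles (one vertex of curvature $\pi/2$ enclosed). One small wording caution: merely ``crossing F at two distinct points'' does not by itself contradict non-self-intersection (the long-box segment of Fig.~\figref{LongBoxTurns} re-enters its starting face without self-intersecting), so the operative fact to read off the figure is that the two passages actually meet inside F --- which your right-angle/rotation observation correctly supplies.
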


The remaining four cases are possible for a single geodesic segment, but not for a geodesic segment that is part of a quasigeo.

\begin{claim} 
\label{claim:backward-cases}
Consider a geodesic segment $g$ that is a component of a simple
closed quasigeodesic on the cube, and that falls into Case 1, 4, 5, or 7.  Then 
$g$ intersects another segment of the quasigeodesic.
\end{claim}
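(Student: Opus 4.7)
The plan is to treat Cases 1, 4, 5, and 7 individually by tracing $g$ in the unfolded cube to its terminating vertex $v$, determining the wedge of admissible directions for the next segment $g'$ of the quasigeodesic at $v$, and verifying that $g'$ (or a further continuation) must cross $g$.

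For each case I first use the unfolding displayed in Fig.~\ref{fig:SevenCases} to identify where $g$ ends. Because the slope is rational of bounded denominator in the specified range, and because $g$ does not self-intersect in these cases (Claim~\ref{claim:easy-cases} has ruled out the other ranges), the terminating vertex $v$ and the sequence of faces $g$ crosses are determined. In Case~1 (slope $\le 1/4$), $g$ wraps around the four side faces and ends on the top; in Cases~4 and~5 (slopes near $1/2$), $g$ is shorter and crosses only a few faces; in Case~7 (slope near $1$), $g$ is a near-diagonal. Next I examine $g'$ at $v$: the surface angles on the two sides of the quasigeodesic at $v$ must each be at most $\pi$ while summing to the cone angle $3\pi/2$, so $g'$ must emerge into a wedge of angular width $\pi/2$ centered on the ``straight-through'' direction of $g$. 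I would develop the cube around $v$ into the plane already containing $g$, and check that this wedge overlaps the trace of $g$ inside a face shared by both developments. In Cases 4, 5, and 7 this reduces to computing the boundary slopes of the wedge and verifying that the line of $g$ lies between them in the relevant face.

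The main obstacle is Case~1, the spiraling case, where $g$ is long and wraps multiple faces before reaching $v$, so the direct overlay may not suffice. Here I would iterate: follow further segments of the quasigeodesic from $v$, repeatedly applying the same wedge constraint at each subsequent vertex, and combine with the global closure condition. Gauss--Bonnet forces each side of a simple closed quasigeodesic on the cube to enclose Gaussian curvature totalling $2\pi$ (split between enclosed vertices and on-curve vertices), which rules out closures that avoid $g$ altogether. The core of the argument is the geometric observation that the $\le\pi$-angle constraint at the endpoints of $g$ prevents the quasigeodesic from ``unwrapping'' the spiral in parallel, so its closure must cut inward and cross $g$. This is the polyhedral analogue of the fact highlighted in Section~\ref{sec:Outline} that spiraling cannot occur on the cube, and I expect it to be the technical heart of the proof.
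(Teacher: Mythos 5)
Your overall strategy---constrain the adjacent geodesic segment by the $\le\pi$ condition at a shared vertex and exhibit a forced crossing with $g$---is the same idea the paper uses, but you anchor it at the wrong end of $g$, and this creates a gap you do not close. You trace $g$ forward to ``its terminating vertex $v$'' and assert that, within each case, the terminating vertex and the sequence of faces crossed are determined. This is false: each of Cases 1, 4, 5, and 7 is a \emph{range} of slopes, and the slopes in that range which realize vertex-to-vertex geodesic segments form an infinite family with unbounded denominators. In Case~1, for instance, slope $1/n$ for every $n\ge 4$ gives a valid non-self-intersecting segment from $v_1$ that spirals around the four side faces about $n/4$ times before reaching a vertex; different slopes end at different vertices after crossing arbitrarily many faces. (You cannot invoke Corollary~\corref{NotTwice} to bound this, since that corollary is deduced from the very lemma this claim serves to prove.) So there is no single terminal vertex $v$ at which to set up your $\pi/2$ wedge, and your ``direct overlay'' computation in Cases 4, 5, and 7 is likewise not one computation but infinitely many.

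You recognize that Case~1 is the hard case, but your proposed repair---iterating the wedge constraint forward and appealing to Gauss--Bonnet to show the closure ``must cut inward and cross $g$''---is stated as an expectation rather than an argument; the sentence ``I expect it to be the technical heart of the proof'' concedes exactly the step that is missing. The paper avoids all of this by working at the \emph{starting} vertex $v_1$, which (after factoring out symmetry) is common to every segment in a given case: all such $g$ lie in a single F-cone of angle $\q$ at $v_1$, so the \emph{previous} segment $g'$ is confined to a cone of angle $\q+\pi/2$ there. That cone is split into three sub-cases: strictly inside the quarter-circle (immediate crossing); in the extra $\q$-sliver, which wraps around an adjacent vertex and crosses $g$ in a nearby face; or exactly along an edge, which triggers a short, explicitly terminating edge-following iteration that ends either in a crossing or in an angle at $v_1$ too sharp for a quasigeodesic. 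No control over the far endpoint of $g$, and no Gauss--Bonnet, is needed. To salvage your write-up you would either need to reorient the quasigeodesic so that your wedge analysis happens at the common vertex $v_1$, or supply the missing uniform argument covering the infinitely many possible terminal configurations.
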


\begin{proof}
We find an intersection point by following the quasigeo backwards from $v_1$, the starting vertex of $g$.
Let $g'$ be the geodesic segment before $g$. We trace $g'$ backwards from its terminus at $v_1$.


\hide{
\subsection{Set-Up}
Since we factor out symmetries, it
is no loss of generality to assume $g$ starts at $v_1$ and crosses
face F at a counterclockwise angle $\a$ with respect to $v_1 v_2$
in $(0,\pi/4]$. 
See again Fig.~\figref{CubeLabeling}.


} 
\begin{figure}[htbp]
\centering
\includegraphics[width=1.0\textwidth]{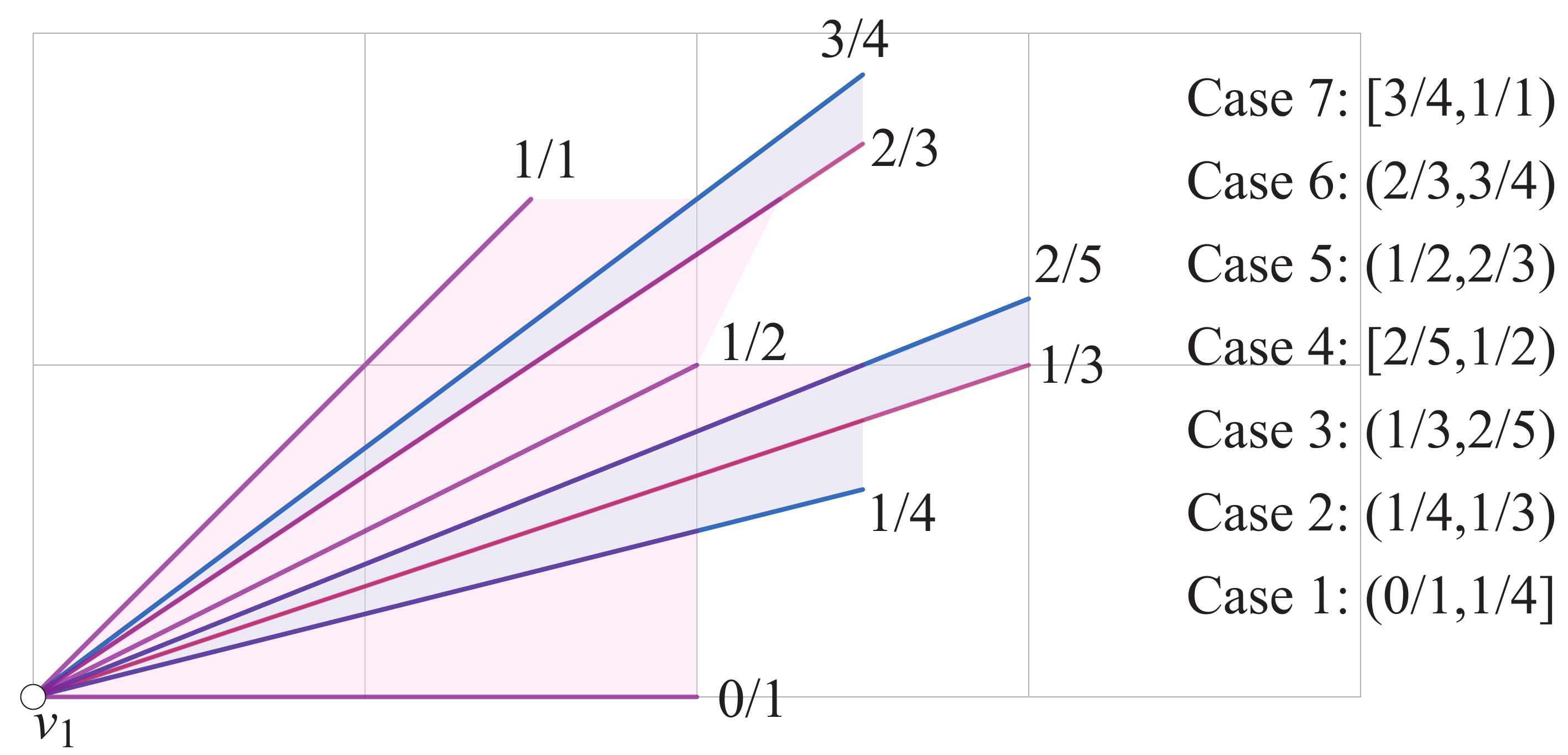}
\caption{The seven slope ranges.  Cases 2, 3, and 6 (in blue) are ruled out in Claim~\ref{claim:easy-cases}, and Cases 1, 4, 5, and 7 (in pink) are ruled out in Claim~\ref{claim:backward-cases}, leaving only the five slopes (in purple) allowed in Lemma~\lemref{5Slopes}.}
\figlab{CasesFill45deg}
\end{figure}

\begin{figure}[htbp]
\centering
\includegraphics[width=1\textwidth]{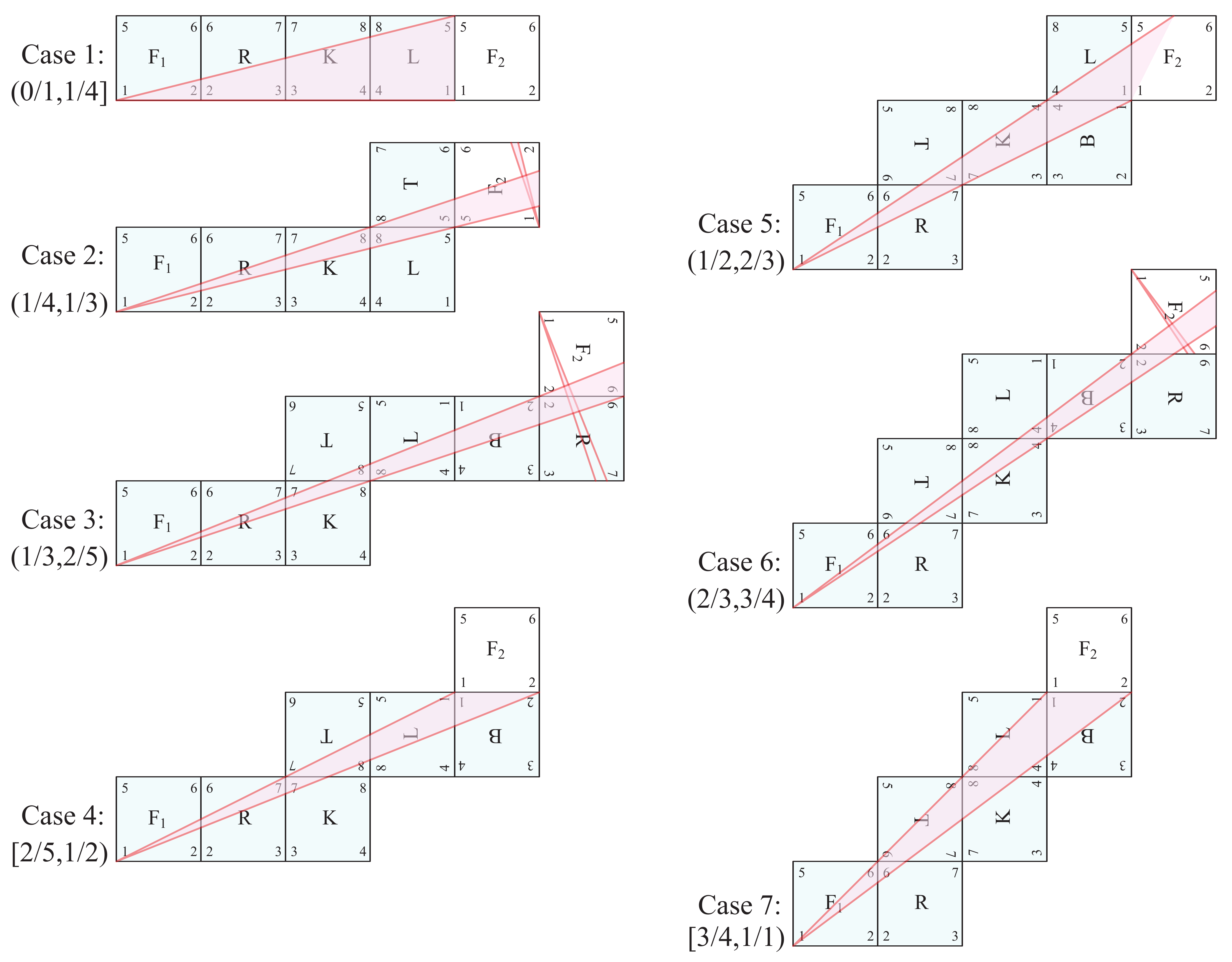}
\caption{The seven slope cases, showing the range of slopes (in pink) progressing across the faces of the cube.  The geodesic segment starts in face $F_1$ and revisits the starting face, marked $F_2$ (in white).
}
\figlab{SevenCases}
\end{figure}
%

\begin{figure}[htbp]
\centering
\includegraphics[width=0.5\textwidth]{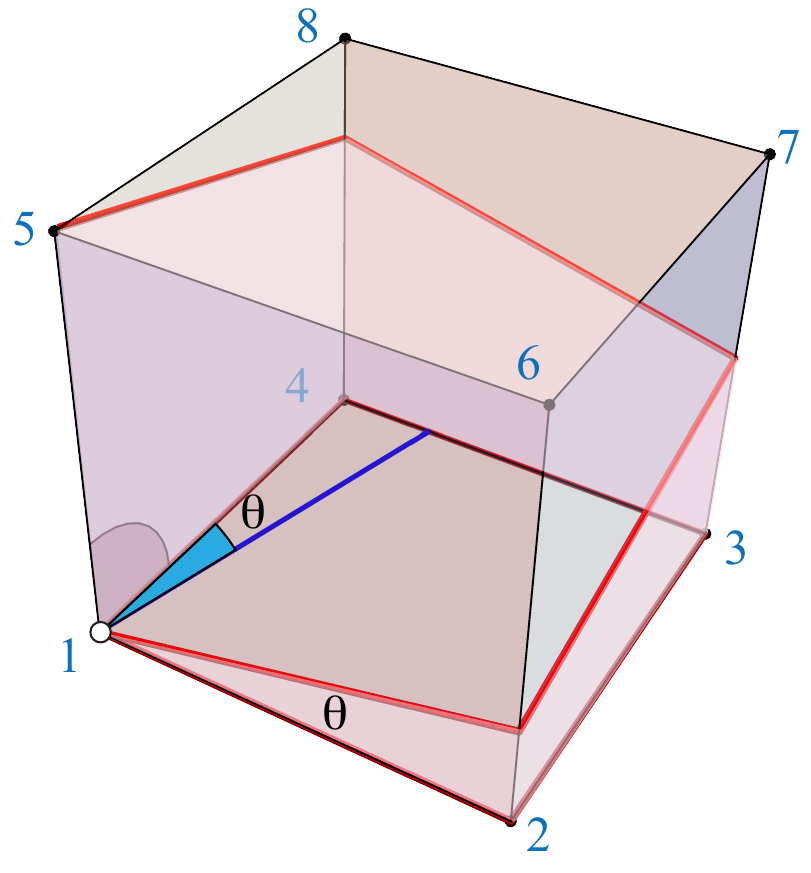}
\caption{The Case~1 F-cone in 3D.
$\q= \arctan(1/4)$.
Cf. Fig.~\protect\figref{BackwardRays}.}
\figlab{Cone3D14}
\end{figure}

\paragraph{Case 1.}
We focus on Case~1 in Fig.~\figref{BackwardRays}.
The 2D unfolding of that case is shown on the 3D cube
in Fig.~\figref{Cone3D14}.
Cases~4, 5 and~7 will follow the same general scheme as does Case~1.


View $g$ as directed crossing faces
$\textrm{F}_1$, R, K, L in order.
In Case~1, $g$ has slope in $(0/1,1/4]$ and lies within the pink F-cone of angle $\theta$ as  illustrated.
We now show that $g$ cannot be part
of a quasigeo, by analyzing the 
possibilities for the previous geodesic segment $g'$.

Because the angle between $g$ and $g'$ at $v_1$ must be $\le \pi$, $g'$ must leave $v_1$ in a $\theta + \pi/2$ cone that extends counterclockwise from edge $v_1v_5$.  This cone is open along edge $v_1 v_5$ and closed on its other boundary.  See vertex $v_1$ in Face $F_2$ in the figure.  We partition the cone into three possibilities: 


\begin{enumerate}[(1)]
\item
$g'$ 
lies strictly within the quarter-circle on face L at $v_1$
(counterclockwise between edges  $v_1 v_5$ and $v_1 v_4$).
Then $g'$ crosses $g$
no matter where $g$ and $g'$ lie in their respective cones.
\item
$g'$ lies in the cone of angle $\theta$ 
counterclockwise of edge $v_1 v_4$. This cone (colored blue in Fig.~\figref{Cone3D14}) is open along the edge $v_1 v_4$ and closed on its other boundary.
Then $g'$ wraps clockwise around $v_4$ by $\pi/2$, and crosses $g$ in face K.

\item
$g'$ follows the edge $v_1 v_4$.
Then $g'$ hits vertex $v_4$ and ends there.  Let $g''$ be the next geodesic segment.  Then $g''$ leaves $v_4$ in 
face K in 
the closed quarter-circle
bounded by edges $v_4 v_8$ and $v_4 v_3$.  Any $g''$ in this cone intersects $g$ unless $g''$ follows the edge $v_4 v_3$.  Repeating this argument, we either find an intersection with $g$, or we eventually follow the edge $v_2 v_1$---but then the angle with $g$ at $v_1$ is too sharp for a quasigeodesic.
\end{enumerate}
So we obtain a quasigeo violation for every $g$ inside or on the
upper boundary of the F-cone in Case~1.

The argument for the remaining cases proceeds similarly, presented below somewhat more concisely.

\begin{figure}[htbp]
\centering
\includegraphics[width=1.1\textwidth]{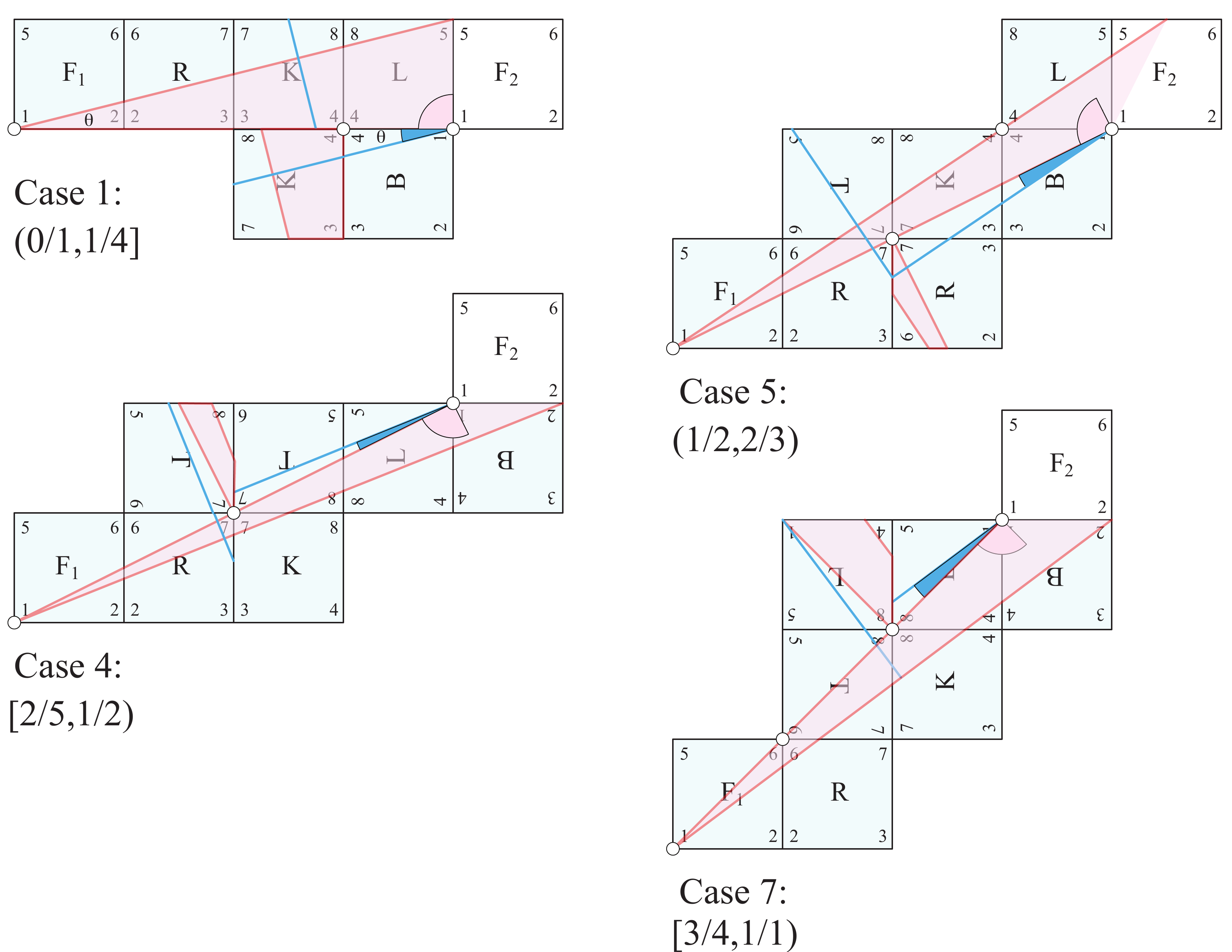}
\caption{The geodesic segment $g$ in the F-cone is crossed by $g'$,
either if starting backwards in the quarter-circle, or starting as much as $\q$ beyond (blue angle and segment.), where $\q$ is the F-cone angle at $v_1$.}
\figlab{BackwardRays}
\end{figure}


\paragraph{Case 4.}
Again the F-cone has angle $\q$ at $v_1$ in $\textrm{F}_1$,
and $g'$ must leave $v_1$ at $\textrm{F}_2$ in a $\q+\pi/2$ cone.
\begin{enumerate}[(1)]
\item $g'$ lies strictly within the quarter-circle
on faces B and L.
Then $g'$ crosses $g$ no matter where they lie in their cones.
\item $g'$ lies in the cone of angle $\q$ strictly clockwise of the upper boundary of the F-cone.
Then $g'$ wraps counterclockwise about $v_7$
and crosses $g$ in face R.
\item $g'$ follows the upper F-cone edge (slope $1/2$).
Then $g'$ hits $v_7$.
As in Case~1, repeating the argument,
the next geodesic segment $g''$
leaves the quarter-circle similarly anchored on $v_7$ and either
crosses $g$ in the F-cone, or hits $v_1$ 
at an angle too sharp for a quasigeodesic.
\end{enumerate}

\paragraph{Case 5.}
\begin{enumerate}[(1)]
\item $g'$ lies strictly within the quarter-circle
on faces B and L.
Then $g'$ crosses $g$ no matter where they lie in their cones.
\item $g'$ lies in the cone of angle $\q$ strictly counterclockwise of the lower boundary of the F-cone.
Then $g'$ wraps clockwise about $v_7$
and crosses $g$ in face R or T.
\item $g'$ follows the lower F-cone edge (slope $1/2$).
Then $g'$ hits $v_7$.
Repeating the arguments of the previous cases,
the next geodesic segment $g''$
leaves the quarter-circle anchored on $v_7$ and either
crosses $g$ in the F-cone, or hits $v_1$ 
at an angle too sharp for a quasigeodesic.
\end{enumerate}

\paragraph{Case 7.}
\begin{enumerate}[(1)]
\item $g'$ lies strictly within the quarter-circle
on faces B and L.
Then $g'$ crosses $g$ no matter where they lie in their cones.
\item $g'$ lies in the cone of angle $\q$ strictly clockwise of the upper boundary of the F-cone.
Then $g'$ wraps counterclockwise about $v_8$
and crosses $g$ in face L or K.
\item $g'$ follows the upper F-cone edge (slope $1/1$).
Then $g'$ hits $v_8$.
We repeat the previous arguments.
The next geodesic segment $g''$
leaves the quarter-circle anchored on $v_8$ and either
crosses $g$ in the F-cone, or hits $v_6$.
Applying the argument again,
the next geodesic segment $g'''$
either crosses $g$ or
or hits $v_1$ 
at an angle too sharp for a quasigeodesic.
\end{enumerate}

This completes the proof of Claim~\ref{claim:backward-cases}.
\end{proof}

\medskip
Claims~\ref{claim:easy-cases} and~\ref{claim:backward-cases}
establish that,
of the seven cases filling the entire range
of slopes (Fig.~\figref{CasesFill45deg}),
all but the five identified slopes are impossible,
and so prove Lemma~\lemref{5Slopes}.

\section{Search for Quasigeos}
\seclab{Search}
We initially found the $15$ quasigeos in Fig.~\figref{InventoryPepa15}
``by hand.''  To establish that there are no other possibilities, we programmed an exhaustive search based on Lemma~\lemref{5Slopes}.
%
We chose to use a DFS search, starting with the longest geodesic segments
first, because they maximize pruning.
Ordered by lengths, the slopes are $2/3 > 1/3 > 1/2 > 1/1 > 0/1$.
Examples of pruning are shown in Fig.~\figref{DFS_pruning}.

The DFS found $29$ quasigeos, and after eliminating the duplicates congruent by a symmetry,
exactly the $15$ in Fig.~\figref{InventoryPepa15} remain.\footnote{We have not made our code available, but it is an easy programming exercise to verify our exhaustive search.}


\medskip
Recall
Corollary~\corref{NotTwice} established that no single
geodesic segment part of a cube quasigeo can cross a face more than once.
This contrasts with the long box example, Fig.~\figref{LongBoxTurns}.
A consequence of the inventory of the $15$ quasigeos
is that no cube quasigeo can cross a face more than once.


%

\begin{figure}[htbp]
\centering
\includegraphics[width=1.0\textwidth]{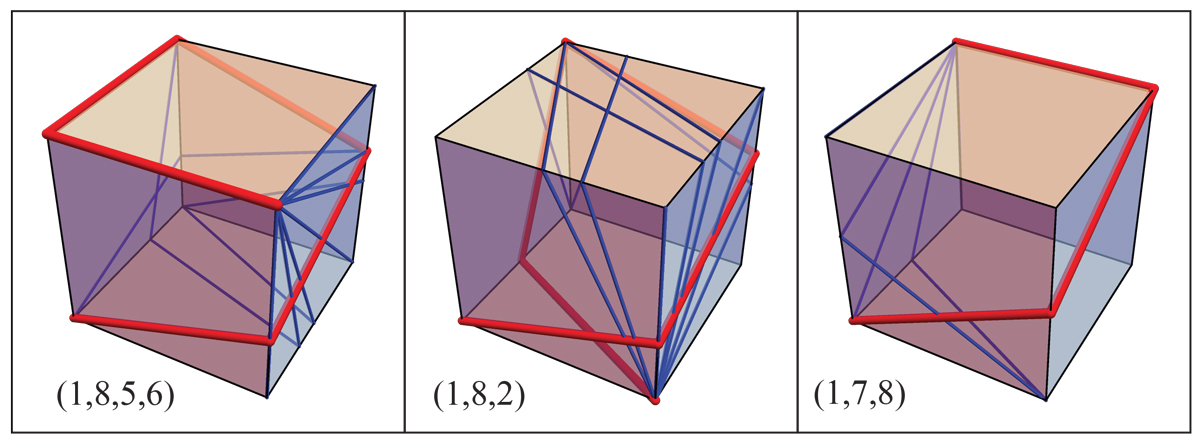}
\caption{Red: Partial quasigeo, through vertices listed. 
Blue: All possible next segments based on angle with the previous segment.} 
\figlab{DFS_pruning}
\end{figure}

\section{Discussion and Open Problems}
\seclab{Conclusion}
We have proved Theorem~\thmref{main15}
by verifying that the list in Fig.~\figref{InventoryPepa15} is exhaustive.
%
Below we list several open questions.
\begin{itemize}
\item
Is there a finite upper bound to the number of simple closed quasigeodesics (that are not geodesics) 
on a given nondegenerate polyhedron of $n$ vertices?
There is no such bound for simple closed geodesics.
Nor is there a bound for (degenerate) doubly-covered squares:
see Fig.~\figref{SquareSpiral}.
\begin{figure}[htbp]
\centering
\includegraphics[width=0.75\textwidth]{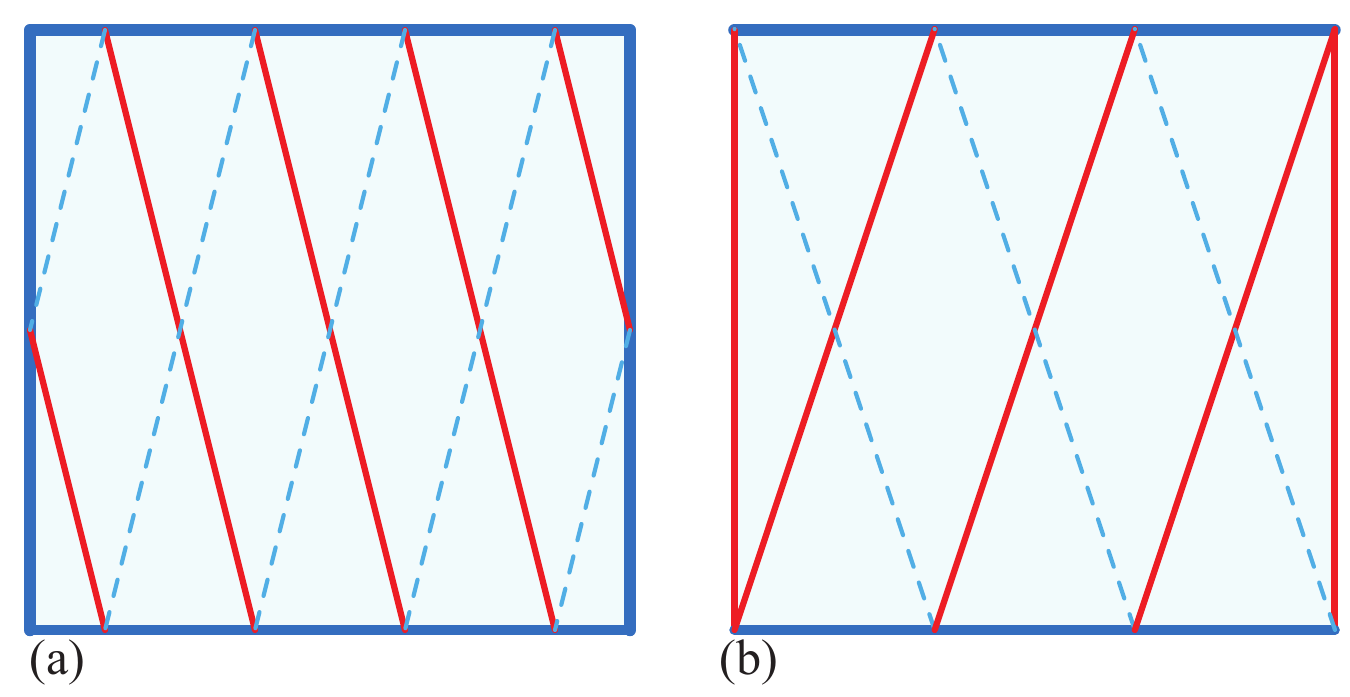}
\caption{Doubly-covered square. 
Red segments: front. Blue segments: back.
(a)~Simple closed geodesic. (b)~Simple closed quasigeodesic.}
\figlab{SquareSpiral}
\end{figure}

%
\item It was proved in~\cite{QonT} that every tetrahedron has 
a simple closed geodesic or a $1$-vertex simple closed quasigeodesic.
That the same holds for any convex polyhedron was conjectured in~\cite{Reshaping}. 
As mentioned, it is known
from~\cite{davis2017geodesics} that the cube does not have a $1$-vertex simple
closed quasigeodesic, but it does have simple closed geodesics,
so the cube accords with the conjecture.
Settling the conjecture either way seems currently out of reach.
\item A slightly non-cubical box,
$1 \times 1 \times h$ for $h \in (1,2)$,
has a ``diamond'' $1$-vertex simple closed quasigeodesic:
see Fig.~\figref{PepaDiamond}.
\begin{figure}[htbp]
\centering
\includegraphics[width=0.75\textwidth]{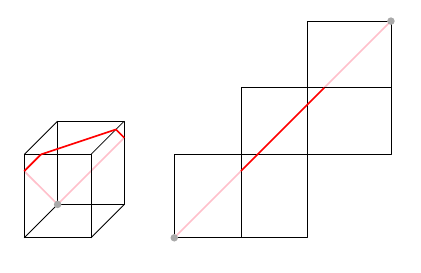}
\caption{A $1$-vertex quasigeo on a $1 \times 1 \times 1\frac{1}{4}$ box.}
\figlab{PepaDiamond}
\end{figure}
Characterizing all simple closed quasigeodesics on boxes is a natural next step.
%
\end{itemize}

\clearpage
\bibliographystyle{alpha}
\bibliography{/Users/jorourke/Documents/geom}

\end{document}